\newaliascnt{proposition}{lemma}
\newaliascnt{theorem}{lemma}
\newtheorem{theorem}[theorem]{Theorem}
\newaliascnt{assumption}{lemma}
\newaliascnt{remark}{lemma}
\newtheorem{remark}[remark]{Remark}
\theoremstyle{nonumberplain}
\newtheorem{proof}{Proof}
\let\RE\Re
\let\Re=\undefined
\DeclareMathOperator{\Re}{\RE e}
\let\IM\Im
\let\Im=\undefined
\DeclareMathOperator{\Im}{\IM m}
\newcommand{\R}{\mathbbm R}
\renewcommand{\C}{\mathbbm C}
\newcommand{\e}{\mathrm e}
\DeclareMathOperator{\curl}{\nabla \times}
\newcommand{\Hi}{\b H^{int}}
\newcommand{\Ei}{\b E^{int}}
\newcommand{\He}{\b H^{ext}}
\newcommand{\Ee}{\b E^{ext}}
\newcommand{\he}{h^{ext}}
\newcommand{\ee}{e^{ext}}
\newcommand{\n}{\b n}
\newcommand{\ta}{\bm{\tau}}
\renewcommand{\b}[1]{\ensuremath{\mathbf{#1}}} 
\begin{document}


\title{The electromagnetic scattering problem by a cylindrical doubly-connected domain at oblique incidence: the direct problem }

\author{Leonidas Mindrinos\thanks{leonidas.mindrinos@univie.ac.at}}

\affil{Computational Science Center, University of Vienna,  Austria}


\maketitle

 \begin{abstract}
 We consider the direct electromagnetic scattering problem of time-harmonic obliquely incident waves by a infinitely long, homogeneous and doubly-connected cylinder in three dimensions. We apply a hybrid integral equation method (combination of the direct and indirect methods)  
 and we transform the scattering problem to a system of singular and hypersingular integral equations. The well-posedness of the corresponding problem is proven. We use trigonometric polynomial approximations and we solve the system of the discretized integral operators by a collocation method. 
 \end{abstract}

\section{Introduction}\label{sec_intro}

The scattering problem of electromagnetic waves by a penetrable medium generates theoretical and numerical questions. Even if it is the direct (given the medium, compute the scattered wave) or the inverse (recover the medium from the far-field pattern) problem, the main and first question to ask is that of the unique solvability. Numerically, both problems can be solved using similar techniques but the nonlinearity and the ill-posedness of the inverse problem have to be taken into account. For a review on scattering theory for solving direct and inverse problems, we refer to the books \cite{CakCol06, ColKre13, ColKre13b, KirHet15}.

Since we use electromagnetic waves as incident fields the mathematical model is based on Maxwell's equations and the transmission conditions describe the continuity of the tangential components of the electric and magnetic fields. The three-dimensional problem can, however, be reduced to simpler problems for the Helmholtz equation given some assumptions on the incident illumination and the optical properties of the medium.

We specify the medium to be a infinitely long, penetrable cylinder embedded in a homogeneous dielectric medium. The incoming electromagnetic wave is a time-harmonic plane wave at oblique incidence (transverse magnetic polarized). This problem has been considered by many researchers from different fields because of its applications in industry and medical imaging, see for instance \cite{ErtRoj00, LucPanSche10, Roj88, SarSen90}. From a mathematical point of view, this scattering problem has also attracted considerable attention. Many methods have been considered for the numerical solution of this problem, see \cite{CanLee91, Tsa07, TsiAli07, WuLu08}, but only recently the well-posedness of the direct problem has been addressed \cite{GinMin16, NakWan13, WanNak12}.

In this work we extend the results of \cite{GinMin16} to the case of a doubly-connected cylinder. The interior simply connected domain is a perfect electric conductor. This setup is motivated by the analysis of the behavior of antennas and tubes. We consider the Leontovich impedance boundary condition on the inner boundary together with transmission conditions on the outer boundary. Following \cite{WanNak12}, we see that the three-dimensional scattering problem is reduced to a system of four
two-dimensional Helmholtz equations for the interior and the exterior electric and magnetic fields. The complication of the problem lies in the reformulated boundary conditions where the tangential derivatives of the fields appear. 

We consider a hybrid integral equation method \cite{KleMar88}, meaning a combination of the direct (Green's formulas) and the indirect (single-layer ansatz) methods. The method of boundary integral equations has been considered for solving both direct and inverse scattering problems in different regimes. For some recent applications we refer to
 \cite{BouTurDom16, CakKre13, ChaGinMin17, ChaIvaPro13, GinMin17, IvaLou16, WanNak12}. We transform the direct problem to a system of singular and hypersingular integral equations. This system is of Fredholm type (uniqueness of solution) and the Fredholm alternative theorem gives existence. We use the collocation method to solve numerically the system of integral equations and we consider the Maue's formula for reducing the hypersingularity of the normal derivative of the double-layer potential \cite{Kre95, Kre14}. 

The paper is organized as follows: In \autoref{sec_problem} we formulate the direct scattering problem and we gather the necessary equations and boundary conditions. The existence and uniqueness of solutions, using Green's formulas and the integral equation method, are proved in \autoref{sec_unique}. In the last section we present numerical examples with analytic solutions that justify the applicability of the proposed scheme.

\section{Formulation of the problem}\label{sec_problem}

In this work we consider the scattering of a time-harmonic electromagnetic wave by an infinitely long, penetrable  and doubly-connected cylinder in three dimensions. We assume that the cylinder $\Omega_{int} \subset \R^3$ is oriented parallel to the $z-$axis and that it is homogeneous, meaning its properties are described by the constant electric permittivity $\epsilon_1$ and the magnetic permeability $\mu_1 .$ The exterior domain $\Omega_{ext} : = \R^3 \setminus \overline{\Omega}_{int}$ is characterized equivalently by the constant coefficients $\epsilon_0$ and $\mu_0 .$ 
The smooth boundary $\partial \Omega$ of the cylinder consists of two disjoint surfaces $\partial \Omega_1$ and $\partial \Omega_0$ such that $\partial \Omega = \partial \Omega_1 \cup \partial \Omega_0,$  and $\partial \Omega_1 \cap \partial \Omega_0 = \emptyset .$ We assume that $\partial \Omega_1$ is contained in the interior of $\partial \Omega_0.$

We define the exterior electric and magnetic fields $\Ee , \He : \Omega_{ext} \rightarrow \C^3,$ respectively and the interior fields $\Ei , \Hi : \Omega_{int} \rightarrow \C^3,$ which satisfy the system of Maxwell's equations 
\begin{equation}\label{eq_Maxwell}
  \begin{aligned}
\curl \Ee - i \omega \mu_0  \He  &= 0, & \curl \He + i \omega \epsilon_0 \Ee &= 0,  & \mbox{in  } \Omega_{ext} ,\\
\curl \Ei - i \omega \mu_1  \Hi &= 0, & \curl \Hi + i \omega \epsilon_1  \Ei  &= 0,  
& \mbox{in  }  \Omega_{int} ,
\end{aligned}
\end{equation}
where $\omega >0$ is the frequency. We impose transmission conditions on the outer boundary
\begin{equation}\label{bound_cond_ext}
\n \times \Ei  =   \n \times \Ee, \quad \n \times \Hi  =   \n \times \He,  \quad \mbox{on  } \partial \Omega_0 ,
\end{equation}
 and the Leontovich impedance boundary condition on the inner boundary
\begin{equation}\label{bound_cond_int}
(\n \times \Ei ) \times \n =  \lambda \, \n \times \Hi  ,  \quad \mbox{on  } \partial \Omega_1 .
\end{equation}
Here $\n$ is the normal vector and  $\lambda \in C^1 (\partial \Omega_1)$ is the impedance function. These conditions model a penetrable cylinder which does not allow the fields to penetrate deep into the ``hole", the  simply-connected domain $\R^3 \setminus (\overline{\Omega}_{int} \cup \overline{\Omega}_{ext}).$

The scatterer is illuminated by a time-harmonic transverse magnetic polarized electromagnetic plane wave, the so-called oblique incident wave. The cylindrical symmetry and the homogeneity of the medium reduces the three-dimensional scattering problem \eqref{eq_Maxwell} -- \eqref{bound_cond_int} to a two-dimensional problem only for the $z-$components of the fields, see for instance \cite{GinMin16, NakWan13, WanNak12}.

\begin{figure}[t!]
\begin{center}
\includegraphics[scale=0.65]{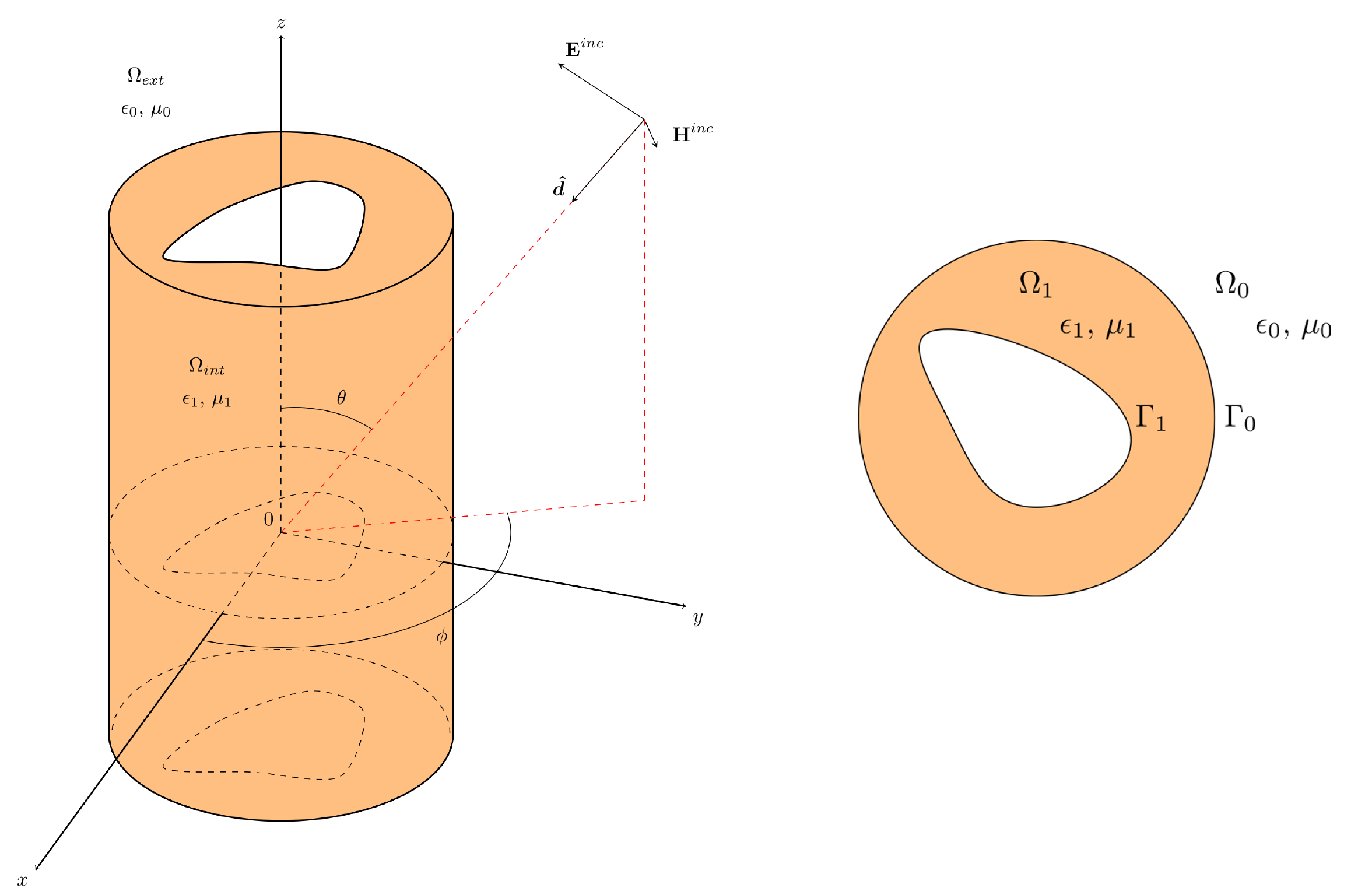}
\caption{The geometry of the electromagnetic scattering problem in $
\R^3$ for a doubly-connected cylinder oriented parallel to the $z-$axis (left). The horizontal cross section at the plane $z=0$  and the notation used in the two-dimensional problem (right).}
\label{fig1setup}
\end{center}
\end{figure}

We define by $\theta \in (0,\pi )$  the incident angle with respect to the negative $z-$axis and by $\phi \in [0,2\pi]$ the polar angle of the incident direction $\bm{\hat{d}}$, see the left picture in \autoref{fig1setup}. Let $k_0 = \omega \sqrt{\mu_0 \epsilon_0}$  be the wave number in $\Omega_{ext}$. We define $\beta = k_0 \cos \theta, $ $\kappa_0^2 = k_0^2 - \beta^2,$ and $\kappa^2_1 = \mu_1 \epsilon_1 \omega^2 - \beta^2 ,$ assuming that $\mu_1 \epsilon_1 >  \mu_0 \epsilon_0\cos^2\theta$ such that $\kappa^2_1 >0.$
We denote by $\Omega_1$ the horizontal cross section of the cylinder. Then, $\Omega_1$ is a doubly-connected bounded domain in $\R^2$ with a $C^2$ smooth boundary $\Gamma,$ consisting of two disjoint closed curves $\Gamma_1$ and $\Gamma_0$ such that $\Gamma = \Gamma_1 \cup \Gamma_0,$  and $\Gamma_1 \cap \Gamma_0 = \emptyset ,$ see the right picture in \autoref{fig1setup}.

Let $\b x = (x,y) \in\R^2.$ Then, the exterior fields (the $z-$components of $\Ee , \, \He$) defined by $\ee (\b x), \, \he (\b x),$ for $\b x \in\Omega_0$ and the interior fields $e^1 (\b x), \, h^1 (\b x),\,\b x \in\Omega_1 $ (the $z-$components of $\Ei , \, \Hi$), satisfy the system of Helmholtz equations
\begin{equation}\label{helm}
\begin{aligned}
\Delta \ee + \kappa^2_0 \,\ee &= 0,  &
\Delta \he + \kappa^2_0 \,\he &= 0,  & \mbox{in  } \Omega_0 , \\
\Delta e^1 + \kappa^2_1 \,e^1 &= 0, &
\Delta h^1 + \kappa^2_1 \,h^1 &= 0, & \mbox{in  } \Omega_1 .
\end{aligned}
\end{equation}

The boundary conditions \eqref{bound_cond_ext} and \eqref{bound_cond_int} can also be rewritten only for the $z-$components of the fields. Let $\b n = (n_1 , n_2)$ and $\bm \tau = (-n_2 , n_1)$ be the normal and tangent vector on $\Gamma ,$ respectively. The vector $\b n$ on $\Gamma_j$ points into $\Omega_j , \, j=0,1.$
We define $\tfrac{\partial}{\partial n } = \b n \cdot \nabla, \, \tfrac{\partial}{\partial \tau } = \bm \tau \cdot \nabla ,$ 
where $\nabla$ is the two-dimensional gradient and we set
 $$\tilde\mu_j = \frac{ \mu_j }{ \kappa_j^2} , \quad \tilde\epsilon_j = \frac{\epsilon_j }{ \kappa_j^2} , \quad \beta_j = \frac{\beta }{ \kappa_j^2 }, \quad \mbox{for   } j=0,1.$$

Then, the transmission conditions \eqref{bound_cond_ext} take the form \cite{GinMin16}
\begin{equation}\label{boundary_ext}
\begin{aligned}
e^1 &= \ee , & \mbox{on   } \Gamma_0 , \\
\tilde\mu_1 \omega \frac{\partial h^1}{\partial n }  + \beta_1 \frac{\partial e^1}{\partial \tau } &= \tilde\mu_0 \omega \frac{\partial \he}{\partial n }  + \beta_0 \frac{\partial \ee}{\partial \tau }, & \mbox{on   } \Gamma_0 , \\
h^1 &= \he , & \mbox{on   } \Gamma_0 , \\
\tilde\epsilon_1 \omega \frac{\partial e^1}{\partial n }  - \beta_1 \frac{\partial h^1}{\partial \tau } &= \tilde\epsilon_0 \omega \frac{\partial \ee}{\partial n }  - \beta_0 \frac{\partial \he}{\partial \tau }, & \mbox{on   } \Gamma_0 ,
\end{aligned}
\end{equation}
and the impedance boundary condition results to \cite{NakWan13}
\begin{subequations}\label{boundary_int}
\begin{alignat}{2}
 \tilde\mu_1 \omega   \frac{\partial h^1}{\partial n } + \beta_1 \frac{\partial e^1}{\partial \tau } +\lambda i h^1 &= 0 , & \quad \mbox{on   } \Gamma_1 , \label{interior1}\\
\lambda \tilde\epsilon_1 \omega \frac{\partial e^1}{\partial n }  - \lambda\beta_1 \frac{\partial h^1}{\partial \tau } + i e^1 &= 0, & \quad \mbox{on   } \Gamma_1 . \label{interior2}
\end{alignat}
\end{subequations}

The exterior fields are decomposed as $\ee = e^0 + e^{inc}$ and $\he = h^0 + h^{inc},$ where $e^0$ and $h^0$ is the scattered electric and magnetic field, respectively. The incident wave $(\b E^{inc}, \, \b H^{inc})$ reduces similarly to the fields ($z-$components) \cite{GinMin16, NakWan12b}
\begin{equation}\label{incident}
\begin{aligned}
e^{inc} (\b x) = \frac1{\sqrt{\epsilon_0}} \sin \theta \, \e^{i\kappa_0 (x \cos \phi +y \sin \phi )}, \quad
h^{inc} (\b x) = 0.
\end{aligned}
\end{equation}

The scattered fields satisfy also the radiation conditions
\begin{equation}\label{radiation}
\begin{aligned}
\lim_{r \rightarrow \infty} \sqrt{r} \left( \frac{\partial e^0}{\partial r} - i\kappa_0 e^0 \right) =0 , \quad
 \lim_{r \rightarrow \infty} \sqrt{r} \left( \frac{\partial h^0}{\partial r} - i\kappa_0 h^0 \right) =0 , \\
\end{aligned}
\end{equation}   
where $r = |\b x |,$ uniformly over all directions. 

The solutions $e^0$ and $h^0$ of  \eqref{helm} -- \eqref{radiation} admit the asymptotic behavior
\begin{equation}\label{far}
e^0  (\b x ) = \frac{\e^{i\kappa_0 r }}{\sqrt{r}} e^\infty (\b{\hat{x}}) + \mathcal{O} ( r^{-3/2}) , \quad h^0  (\b x ) = \frac{\e^{i\kappa_0 r }}{\sqrt{r}} h^\infty (\b{\hat{x}}) + \mathcal{O} (r^{-3/2}),
\end{equation}
where $\b{\hat{x}} = \b x / r.$ The pair $(e^\infty, h^\infty)$ is called the far-field pattern of the scattered fields related to the scattering problem  \eqref{helm} -- \eqref{radiation}.
We can formulate now the direct problem which we consider in this work.

\begin{description}
\item[Direct Problem:] Given the coefficients $\lambda, \,\epsilon_0 , \, \mu_0 , \epsilon_1 $ and $\mu_1$, the boundary curve $\Gamma = \Gamma_0 \cup \Gamma_1 ,$ and the incident field \eqref{incident}, find the interior fields $e^1$ and $h^1$ and the scattered fields $e^0$ and $h^0$ which satisfy the system of Helmholtz equations \eqref{helm}, the boundary conditions \eqref{boundary_ext} and \eqref{boundary_int} and the radiation conditions \eqref{radiation}.

\end{description}

\begin{remark}
Similar analysis holds also for the case of transverse electric polarized incident wave. The case of normal incidence $\theta = \pi /2,$ resulting to $\beta_1 = \beta_0 = 0,$ simplifies even more the scattering problem since it can be written as two decoupled problems for the electric and the magnetic field. 
\end{remark}

\section{Uniqueness results}\label{sec_unique}

In this section we study the well-posedness of the direct problem. We use the integral equation method and we apply the Reisz-Fredholm theory. For the representation of the electric and magnetic (interior and exterior) fields we consider a hybrid method, meaning we combine the direct (Green's formulas) and the indirect (single layer ansatz) methods \cite{KleMar88}. We define the ``hole" as $\Omega_h := \R^2 \setminus (\overline{\Omega}_1 \cup \overline{\Omega}_0).$

\begin{theorem}\label{theorem1}
If $\kappa_1^2$ is not a Dirichlet eigenvalue in $\Omega_1$ and the impedance parameter $\lambda$ is positive, then the direct scattering problem \eqref{helm} -- \eqref{radiation} admits at most one solution.
\end{theorem}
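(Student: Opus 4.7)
The plan is to combine Green's first identity in $\Omega_1$ and in a truncated exterior $\Omega_0\cap B_R$ with all of the boundary conditions, weighted so that after taking imaginary parts every surviving term has a definite sign. Throughout, set the incident field to zero and let $(e^0,h^0,e^1,h^1)$ solve the homogeneous problem; the goal is to show the quadruple vanishes identically.

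First, I apply Green's first identity to $e^1$ and $h^1$ on $\Omega_1$, weighting by $\tilde{\epsilon}_1\omega$ and $\tilde{\mu}_1\omega$ respectively and summing. Since $\kappa_1^2$ is real, the bulk integrand $|\nabla u|^2-\kappa_1^2|u|^2$ is real and drops out of the imaginary part; the outward normal to $\Omega_1$ coincides with $\b n$ on $\Gamma_0$ and equals $-\b n$ on $\Gamma_1$, so opposite-signed boundary integrals remain. On $\Gamma_0$ I plug in the second and fourth conditions of \eqref{boundary_ext} (using the first and third to replace $e^1,h^1$ by $e^0,h^0$), converting $\tilde{\epsilon}_1\omega\,\overline{e^1}\,\tfrac{\partial e^1}{\partial n}+\tilde{\mu}_1\omega\,\overline{h^1}\,\tfrac{\partial h^1}{\partial n}$ into $\tilde{\epsilon}_0\omega\,\overline{e^0}\,\tfrac{\partial e^0}{\partial n}+\tilde{\mu}_0\omega\,\overline{h^0}\,\tfrac{\partial h^0}{\partial n}$ plus a $(\beta_1-\beta_0)$ multiple of $\overline{e^0}\,\tfrac{\partial h^0}{\partial \tau}-\overline{h^0}\,\tfrac{\partial e^0}{\partial \tau}$; the latter integrates to a real number on the closed curve $\Gamma_0$ by integration by parts. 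On $\Gamma_1$ I solve \eqref{interior2} for $\tilde{\epsilon}_1\omega\,\tfrac{\partial e^1}{\partial n}$ using $\lambda>0$, insert it together with \eqref{interior1} into the same combination, and obtain a $\beta_1$-weighted tangential cross-term (again real on the closed curve $\Gamma_1$) plus $-i|e^1|^2/\lambda - i\lambda|h^1|^2$.

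Applying Green's identity in $\Omega_0\cap B_R$ with the same weights transforms the $\Gamma_0$ boundary integral into one over $\partial B_R$. Assembling everything and taking imaginary parts yields
\begin{equation*}
\tilde{\epsilon}_0\omega\,\Im\!\int_{\partial B_R}\!\overline{e^0}\,\tfrac{\partial e^0}{\partial r}\,ds + \tilde{\mu}_0\omega\,\Im\!\int_{\partial B_R}\!\overline{h^0}\,\tfrac{\partial h^0}{\partial r}\,ds + \int_{\Gamma_1}\!\Big(\tfrac{1}{\lambda}|e^1|^2 + \lambda|h^1|^2\Big)\,ds = 0.
\end{equation*}
The standard manipulation of the Sommerfeld condition \eqref{radiation} shows the first two terms are asymptotically $\kappa_0\tilde{\epsilon}_0\omega\lim_R\int_{\partial B_R}|e^0|^2\,ds$ and $\kappa_0\tilde{\mu}_0\omega\lim_R\int_{\partial B_R}|h^0|^2\,ds$, both non-negative, while $\lambda>0$ makes the third term non-negative. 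Hence every term vanishes: Rellich's lemma gives $e^0\equiv h^0\equiv 0$ in $\Omega_0$, and the surface integral forces $e^1=h^1=0$ on $\Gamma_1$.

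The first and third transmission conditions on $\Gamma_0$ then propagate $e^1=e^0=0$ and $h^1=h^0=0$ to $\Gamma_0$, so $e^1$ and $h^1$ solve Helmholtz in $\Omega_1$ with zero Dirichlet data on all of $\partial\Omega_1=\Gamma_0\cup\Gamma_1$. The non-eigenvalue hypothesis on $\kappa_1^2$ then forces $e^1\equiv h^1\equiv 0$ in $\Omega_1$. The one genuinely delicate step is verifying that the $\beta$-weighted tangential cross terms on both $\Gamma_0$ and $\Gamma_1$ integrate to real quantities; this is precisely what closed-curve integration by parts provides, and it is what allows the coupled four-field system to collapse to a single sign-definite identity.
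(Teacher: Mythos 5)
Your proposal is correct and follows essentially the same route as the paper: weighted Green's first identities in $\Omega_1$ and the truncated exterior, substitution of the transmission and impedance conditions, the observation that the $\beta$-weighted tangential cross terms are real by integration by parts on the closed curves, and then Rellich's lemma plus the Dirichlet non-eigenvalue assumption. Your version is in fact slightly more explicit than the paper's in noting that the sign-definite boundary integral forces $e^1=h^1=0$ on $\Gamma_1$, which is needed to have vanishing Dirichlet data on all of $\partial\Omega_1=\Gamma_0\cup\Gamma_1$ before invoking the eigenvalue hypothesis.
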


\begin{proof}
It is enough to show that the corresponding homogeneous problem, has only the trivial solution, meaning, $e^1 = h^1 = 0,$ in $\Omega_1$ and $e^0 = h^0 = 0,$ in $\Omega_0.$ We consider a disk $S_r$ with center at the origin, radius $r>0,$ and boundary $\Gamma_r,$ which contains $\Omega_1 .$  We set $\Omega_r = S_r \setminus (\overline{\Omega}_1 \cup \overline{\Omega}_h) .$

The transmission conditions on $\Gamma_0$ now read
\begin{subequations}\label{boundary_ext_homo}
\begin{alignat}{2}
e^1 &= e^0 , & \quad \mbox{on   } \Gamma_0 , \label{homo1}\\ 
\tilde\mu_1 \omega \frac{\partial h^1}{\partial n }  + \beta_1 \frac{\partial e^1}{\partial \tau } &= \tilde\mu_0 \omega \frac{\partial h^0}{\partial n }  + \beta_0 \frac{\partial e^0}{\partial \tau }, & \quad \mbox{on   } \Gamma_0 , \label{homo2}\\
h^1 &= h^0 , & \quad \mbox{on   } \Gamma_0 , \label{homo3}\\
\tilde\epsilon_1 \omega \frac{\partial e^1}{\partial n }  - \beta_1 \frac{\partial h^1}{\partial \tau } &= \tilde\epsilon_0 \omega \frac{\partial e^0}{\partial n }  - \beta_0 \frac{\partial h^0}{\partial \tau }, & \quad \mbox{on   } \Gamma_0 . \label{homo4}
\end{alignat}
\end{subequations}

We consider Green's first identity in $\Omega_1$ for the electric fields $e^1$ and $\overline{e^1},$ together with the Helmholtz equation \eqref{helm} and the boundary condition \eqref{interior2}, resulting in
\begin{equation}\label{green_omega1a}
\begin{aligned}
\tilde\epsilon_1 \int_{\Gamma_0} e^1 \frac{\partial \overline{e^1} }{\partial n } \, ds  &= \tilde\epsilon_1 \int_{\Omega_1}  \left( |\nabla e^1  |^2  -\kappa_1^2 | e^1 |^2 \right) d \b x + \int_{\Gamma_1} e^1 \left( \frac{\beta_1}{\omega}\frac{\partial \overline{ h^1}}{\partial \tau }  + \frac{i}{\lambda \omega} \overline{e^1}\right) ds .
\end{aligned}
\end{equation}
Similarly, Green's first identity in $\Omega_1$ for the magnetic fields $h^1$ and $\overline{h^1},$ considering the Helmholtz equation \eqref{helm} and the boundary condition \eqref{interior1}, gives
\begin{equation}\label{green_omega1b}
\begin{aligned}
\tilde\mu_1 \int_{\Gamma_0} h^1 \frac{\partial \overline{h^1} }{\partial n } \, ds  &= \tilde\mu_1 \int_{\Omega_1}  \left( |\nabla h^1  |^2  -\kappa_1^2 | h^1 |^2 \right) d \b x + \int_{\Gamma_1} h^1 \left( -\frac{\beta_1}{\omega}\frac{\partial \overline{ e^1}}{\partial \tau }  + \frac{i \lambda}{ \omega} \overline{h^1}\right) ds .
\end{aligned}
\end{equation}
Applying Green's first identity in $\Omega_r$ for the exterior fields and considering the transmission conditions \eqref{homo4} and \eqref{homo2}, we obtain
\begin{equation}\label{green_omegaRa}
\begin{aligned}
\tilde\epsilon_0 \int_{\Gamma_r} e^0 \frac{\partial \overline{e^0} }{\partial n } \, ds  &= \tilde\epsilon_0 \int_{\Omega_r}  \left( |\nabla e^0  |^2  -\kappa_0^2 | e^0 |^2 \right) d \b x 
+ \int_{\Gamma_0} e^0 \left(\tilde\epsilon_1 \frac{\partial \overline{ e^1}}{\partial n }- \frac{\beta_1}{\omega}\frac{\partial \overline{ h^1}}{\partial \tau }  + \frac{\beta_0}{\omega} \frac{\partial \overline{ h^0}}{\partial \tau }\right) ds ,
\end{aligned}
\end{equation}
and
\begin{equation}\label{green_omegaRb}
\begin{aligned}
\tilde\mu_0 \int_{\Gamma_r} h^0 \frac{\partial \overline{h^0} }{\partial n } \, ds  &= \tilde\mu_0 \int_{\Omega_r}  \left( |\nabla h^0  |^2  -\kappa_0^2 | h^0 |^2 \right) d \b x 
+ \int_{\Gamma_0} h^0 \left(\tilde\mu_1 \frac{\partial \overline{ h^1}}{\partial n }+ \frac{\beta_1}{\omega}\frac{\partial \overline{ e^1}}{\partial \tau }  - \frac{\beta_0}{\omega} \frac{\partial \overline{ e^0}}{\partial \tau }\right) ds .
\end{aligned}
\end{equation}

We take the imaginary part of \eqref{green_omegaRa}, and using \eqref{homo1} and \eqref{green_omega1a}, we have that
\begin{equation*}
\begin{aligned}
\Im \left(\tilde\epsilon_0 \int_{\Gamma_r} e^0 \frac{\partial \overline{e^0} }{\partial n } \, ds \right) &= \Im \left(\frac{\beta_1}{\omega} \int_{\Gamma_1}  e^1  \frac{\partial \overline{ h^1}}{\partial \tau } ds- \frac{\beta_1}{\omega}\int_{\Gamma_0} e^1\frac{\partial \overline{ h^1}}{\partial \tau } ds + \frac{\beta_0}{\omega} \int_{\Gamma_0}e^0\frac{\partial \overline{ h^0}}{\partial \tau } ds \right) \\
&\phantom{=}+ \int_{\Gamma_1} \frac{1}{\lambda \omega} |e^1  |^2 ds.
\end{aligned}
\end{equation*}
Analogously, the imaginary part of \eqref{green_omegaRb}, considering \eqref{homo3} and \eqref{green_omega1b}, takes the form
\begin{equation*}
\begin{aligned}
\Im \left(\tilde\mu_0 \int_{\Gamma_r} h^0 \frac{\partial \overline{h^0} }{\partial n } \, ds \right) &= \Im \left(-\frac{\beta_1}{\omega} \int_{\Gamma_1}  h^1  \frac{\partial \overline{ e^1}}{\partial \tau } ds + \frac{\beta_1}{\omega}\int_{\Gamma_0} h^1\frac{\partial \overline{ e^1}}{\partial \tau }ds  - \frac{\beta_0}{\omega} \int_{\Gamma_0}h^0\frac{\partial \overline{ e^0}}{\partial \tau } ds \right) \\
&\phantom{=}+ \int_{\Gamma_1}\frac{\lambda}{ \omega} |h^1  |^2 ds.
\end{aligned}
\end{equation*}

If $\lambda>0,$ the addition of the above two equations, noting that
\[
-\int_{\Gamma_j} e^k \frac{\partial \overline{ h^k}}{\partial \tau } ds = \overline{\int_{\Gamma_j} h^k \frac{\partial \overline{ e^k}}{\partial \tau } ds}, \quad \mbox{for } k,j= 0,1,
\]
results to
\begin{equation*}
\Im \left(\tilde\epsilon_0 \int_{\Gamma_r} e^0 \frac{\partial \overline{e^0} }{\partial n } \, ds + \tilde\mu_0 \int_{\Gamma_r} h^0 \frac{\partial \overline{h^0} }{\partial n } \, ds\right) = \int_{\Gamma_1} \left( \frac{1}{\lambda \omega} |e^1  |^2 +\frac{\lambda}{ \omega}  |h^1  |^2 \right) ds \geq 0
\end{equation*}

The last equation, the radiation conditions \eqref{radiation} as $r\rightarrow \infty$ and Rellich's Lemma yield $e^0 = h^0 = 0$ in $\Omega_0$ \cite{GinMin16, WanNak12}. Hence, $e^0 = h^0 = 0$ on $\Gamma_0.$ Using the homogeneous transmission conditions \eqref{homo1} and \eqref{homo3} and the assumption on $\kappa_1^2$ we get also $e^1 = h^1 = 0$ in $\Omega_1.$ This completes the proof.
\end{proof}

To prove existence of solutions we transform the direct problem to a system of boundary integral equations. We present the fundamental solution of the Helmholtz equation in $\R^2,$ given by
\begin{equation}
\Phi_j (\b x,\b y) = \frac{i}4 H_0^{(1)} (\kappa_j |\b x-\b y|), \quad \b x,\b y \in\Omega_j , \quad \b x \neq \b y,  
\end{equation}
where $H_0^{(1)}$ is the Hankel function of the first kind and zero order. We introduce the single- and double-layer potentials for a continuous density $f,$ given by
\begin{equation*}
\begin{aligned}
(\mathcal S_{klj} f) (\b x) &= \int_{\Gamma_j} \Phi_k (\b x,\b y) f(\b y) ds (\b y), & \b x \in\Omega_l , \\
(\mathcal D_{klj} f) (\b x) &= \int_{\Gamma_j} \frac{\partial \Phi_k}{\partial n (\b y)} (\b x,\b y) f(\b y) ds (\b y), & \b x \in\Omega_l ,
\end{aligned}
\end{equation*}
for $k,l,j = 0,1.$ The single-layer potential $\mathcal S$ is continuous in $\R^2$ and the their normal and tangential derivatives as $\b x \rightarrow \Gamma_j$ satisfy the standard jump relations, see for instance \cite{GinMin16}. We define the integral operators
\begin{equation}\label{operators}
\begin{aligned}
(S_{klj} f) (\b x) &= \int_{\Gamma_j} \Phi_k (\b x,\b y) f(\b y) ds (\b y), & \b x \in\Gamma_l ,\\
( D_{klj} f) (\b x) &= \int_{\Gamma_j} \frac{\partial \Phi_k}{\partial n (\b y)} (\b x,\b y) f(\b y) ds (\b y), & \b x \in\Gamma_l , \\
( NS_{klj} f) (\b x) &= \int_{\Gamma_j} \frac{\partial \Phi_k}{\partial n (\b x)} (\b x,\b y) f(\b y) ds (\b y), & \b x \in\Gamma_l , \\
( ND_{klj} f) (\b x) &= \int_{\Gamma_j} \frac{\partial^2 \Phi_k}{\partial n (\b x)\partial n (\b y)} (\b x,\b y) f(\b y) ds (\b y), & \b x \in\Gamma_l , \\
( TS_{klj} f) (\b x) &= \int_{\Gamma_j} \frac{\partial \Phi_k}{\partial \tau (\b x)} (\b x,\b y) f(\b y) ds (\b y), & \b x \in\Gamma_l , \\
( TD_{klj} f) (\b x) &= \int_{\Gamma_j} \frac{\partial^2 \Phi_k}{\partial \tau (\b x)\partial n (\b y)} (\b x,\b y) f(\b y) ds (\b y), & \b x \in\Gamma_l .
\end{aligned}
\end{equation}

If we consider the direct method, meaning Green's second identity, for representing the interior and exterior electric and magnetic fields we get
\begin{equation*}
\begin{aligned}
u^1 (\b x) &= (\mathcal S_{110} \, \partial_n u^1) (\b x) - (\mathcal D_{110} \, u^1) (\b x) + (\mathcal S_{111} \, \partial_n u^1) (\b x) + (\mathcal D_{111} \, u^1) (\b x), & \quad \b x \in \Omega_1 , \\
u^0 (\b x) &= (\mathcal D_{000} \, u^0) (\b x) - (\mathcal S_{000} \, \partial_n u^0) (\b x) , & \quad \b x \in \Omega_0 ,
\end{aligned}
\end{equation*}
for $u = e,h.$ We observe that we have 8 unknown densities (4 for the electric and 4 for the magnetic field) for the interior fields and 4 unknown densities for the exterior fields and only 6 equations (the boundary conditions \eqref{boundary_ext} and \eqref{boundary_int}). In order to reduce the number of the unknowns, motivated by \cite{KleMar88}, we consider a hybrid method, meaning a combination of the indirect and direct methods. We keep the direct method for the exterior fields and we consider a single-layer ansatz (indirect method) for the interior fields. 

\begin{theorem}\label{theorem2}
Let the assumptions of \autoref{theorem1} still hold. If $\kappa_1^2$ is not a Dirichlet eigenvalue in $\Omega_h,$ and  $\kappa_0^2$ is not a Dirichlet eigenvalue in $\R^2 \setminus \overline{\Omega}_0,$ then the problem \eqref{helm} -- \eqref{radiation} has a unique solution.
\end{theorem}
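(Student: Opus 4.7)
The plan is to apply the hybrid representation announced above: I would seek the interior fields in the single-layer form
\begin{equation*}
u^1 = \mathcal S_{110}\varphi_0^u + \mathcal S_{111}\varphi_1^u \quad\text{in }\Omega_1,\qquad u=e,h,
\end{equation*}
with unknown densities $\varphi_0^u$ on $\Gamma_0$ and $\varphi_1^u$ on $\Gamma_1$, and I would keep Green's formula
\begin{equation*}
u^0 = \mathcal D_{000}\, g^u - \mathcal S_{000}\, q^u \quad\text{in }\Omega_0,
\end{equation*}
treating the Cauchy data $g^u = u^0|_{\Gamma_0}$ and $q^u = \partial_n u^0|_{\Gamma_0}$ as the remaining unknowns. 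Letting $\b x\to\Gamma_0$ in the direct representation and invoking the jump relations of $\mathcal D_{000}$ and $\mathcal S_{000}$ produces two consistency equations between $g^u$ and $q^u$; substituting the four representations into the transmission conditions \eqref{boundary_ext} on $\Gamma_0$ and into the impedance conditions \eqref{boundary_int} on $\Gamma_1$ yields six further equations, written in terms of the operators \eqref{operators}. The result is a square $8\times8$ system of singular and hypersingular boundary integral equations on $\Gamma$.

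Next I would check that this system is Fredholm of index zero in a suitable product of Sobolev spaces on $\Gamma$. The tangential derivatives in the transmission and impedance conditions introduce the hypersingular operators $ND_{klj}$ and $TD_{klj}$; applying Maue's identity as in \cite{Kre95, Kre14} rewrites these as tangential derivatives of the weakly singular operator $S_{klj}$ modulo compact remainders. With this regularisation the principal part of the $8\times8$ matrix decouples into invertible scalar operators (identity-type contributions from the jump relations together with tangential derivatives of $S$), while the coupling through the two different wave numbers $\kappa_0,\kappa_1$ and through the cross-diagonal entries contributes only compact perturbations, so the Riesz--Fredholm theorem reduces existence to uniqueness for the homogeneous system.

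For the homogeneous problem, the represented fields solve the homogeneous direct problem, so \autoref{theorem1} forces $e^1=h^1=0$ in $\Omega_1$ and $e^0=h^0=0$ in $\Omega_0$; in particular the exterior Cauchy data $g^u,q^u$ vanish. For the single-layer densities I would extend $u^1$ by the same ansatz to all of $\R^2$: it is continuous across $\Gamma$, hence vanishes on $\Gamma_0\cup\Gamma_1$; it satisfies Helmholtz with wave number $\kappa_1$ together with the Sommerfeld radiation condition in $\Omega_0$ (so vanishes there) and it is a homogeneous Dirichlet solution in the hole $\Omega_h$, which also vanishes by the assumption that $\kappa_1^2$ is not a Dirichlet eigenvalue of $\Omega_h$. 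The jump of the normal derivative of the single-layer potential across $\Gamma_0$ and $\Gamma_1$ then yields $\varphi_0^u = \varphi_1^u = 0$. The complementary hypothesis that $\kappa_0^2$ is not a Dirichlet eigenvalue in $\R^2\setminus\overline\Omega_0$ removes the classical spurious resonances of the direct exterior representation and guarantees that the two consistency equations on $\Gamma_0$ uniquely determine $g^u$ and $q^u$.

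The main obstacle is managing the $8\times 8$ coupled system in which tangential and normal derivatives of both wave numbers $\kappa_0,\kappa_1$ appear through \eqref{boundary_ext}: the careful use of Maue's formula to split the hypersingular contributions into an invertible leading operator plus a compact remainder, together with the verification that this principal part is indeed invertible in the chosen Sobolev scale, is the technical heart of the argument and has to be carried out before the Riesz--Fredholm alternative can be combined with \autoref{theorem1} to close the proof.
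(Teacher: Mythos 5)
Your overall architecture is the same as the paper's: a hybrid representation (single-layer ansatz for the interior fields on $\Gamma_0\cup\Gamma_1$, Green's formula for the exterior field), reduction to a system of singular/hypersingular boundary integral equations, Riesz--Fredholm theory for existence, and uniqueness of the densities via \autoref{theorem1} followed by potential-theoretic arguments that consume exactly the two non-eigenvalue hypotheses (the $\kappa_1^2$ condition in $\Omega_h$ to kill the density on $\Gamma_1$, the $\kappa_0^2$ condition in $\R^2\setminus\overline{\Omega}_0$ to kill the remaining exterior density). Your uniqueness paragraph is essentially the paper's argument. Where you genuinely diverge is in how the system is closed: you keep all eight densities and append two Calder\'on consistency equations on $\Gamma_0$, whereas the paper imposes the algebraic coupling \eqref{assume}, $\tilde\epsilon_1\psi_1^e=-\tilde\epsilon_0\psi_0^e$ and $\tilde\mu_1\psi_1^h=-\tilde\mu_0\psi_0^h$, which is the actual Kleinman--Martin ``hybrid'' device: it produces a square $6\times6$ system $(\b B+\b C)\bm\phi=\b f$ in which the non-compact contributions (the jump terms and the $\tfrac12\partial_\tau$ terms coming from the tangential derivatives in \eqref{boundary_ext}--\eqref{boundary_int}) are collected into a triangular operator $\b B$ with an explicit bounded inverse. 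Incidentally, your remark that the consistency equations together with the $\kappa_0^2$ hypothesis ``uniquely determine $g^u$ and $q^u$'' is imprecise as stated: one first gets $g^u=0$ from the vanishing of the exterior field and the Dirichlet-trace identity, and only then does the spectral hypothesis enter, to show that the remaining single-layer (or, in the paper, double-layer) operator is injective.

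The genuine gap is in your Fredholm step. You assert that after Maue regularisation the principal part of the $8\times8$ system ``decouples into invertible scalar operators'' while all couplings are compact, and you yourself flag that verifying this is the unexecuted technical heart. This is not a routine verification: the transmission conditions \eqref{boundary_ext} inject the hypersingular operators $ND_{000}$ and $TD_{000}$ acting on the exterior Dirichlet datum into the \emph{off-diagonal} entries of the system (see $C_{21},C_{23},C_{41},C_{43}$ in the paper), and these are not compact on any single Sobolev space; Maue's identity converts $ND_{000}$ into $\partial_\tau S_{000}\partial_\tau$ plus lower order, which is an order-one operator, not a compact remainder, so the claimed splitting ``invertible diagonal plus compact coupling'' does not hold without further work on the choice of spaces or a different grouping of terms. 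The paper sidesteps exactly this by the reduction \eqref{assume} and by placing all non-smoothing first-order contributions into the explicitly invertible $\b B$, keeping Maue's formula purely for the numerics. Until you either carry out the invertibility of your regularised principal part in a concrete Sobolev scale or adopt a reduction of the type \eqref{assume}, your existence argument is incomplete.
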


\begin{proof}

We search the solutions in the forms:
\begin{equation}\label{fields}
\begin{aligned}
e^1 (\b x) &= (\mathcal S_{110} \, \psi_1^e ) (\b x)  + (\mathcal S_{111} \, \psi_2^e) (\b x), & \quad \b x \in \Omega_1 , \\
h^1 (\b x) &= (\mathcal S_{110} \, \psi_1^h ) (\b x)  + (\mathcal S_{111} \, \psi_2^h) (\b x), & \quad \b x \in \Omega_1 , \\
e^0 (\b x) &= (\mathcal D_{000} \, \phi_0^e) (\b x) -(\mathcal S_{000} \, \psi_0^e ) (\b x) , & \quad \b x \in \Omega_0 , \\
h^0 (\b x) &=  (\mathcal D_{000} \, \phi_0^h) (\b x)-  (\mathcal S_{000} \, \psi_0^h ) (\b x) , & \quad \b x \in \Omega_0 ,
\end{aligned}
\end{equation}
with $\psi_0^u := \partial_n u^0 |_{\Gamma_0}$ and $\phi_0^u := u^0 |_{\Gamma_0},$ for $u=e,h.$ We let $\b x$ approach the boundaries $\Gamma_j,$ and considering the jump relations of the potentials \cite{ColKre13b, CouHil62}, we see that the boundary conditions \eqref{boundary_ext} and \eqref{boundary_int} are satisfied if the densities $\psi_k^u, \, \phi_0^u, \, k=0,1,2, \, u =e,h$ solve the system of integral equations \small
\begin{equation}\label{system1}
\begin{aligned}
S_{100} \psi_1^e + S_{101} \psi_2^e - \left(D_{000} +\tfrac12 \right)\phi_0^e + S_{000}\psi_0^e &= e^{inc}, \\
\tilde{\mu}_1 \omega \left(NS_{100} +\tfrac12 \right) \psi_1^h +\tilde{\mu}_1 \omega NS_{101} \psi_2^h +\beta_1 TS_{100} \psi_1^e +\beta_1 TS_{101} \psi_2^e - \tilde \mu_0 \omega ND_{000} \phi_0^h \\+\tilde \mu_0 \omega \left(NS_{000} -\tfrac12 \right) \psi_0^h -\beta_0 (TD_{000} +\tfrac{\partial_\tau}{2}) \phi_0^e + \beta_0 TS_{000} \psi_0^e &=\beta_0 \partial_\tau e^{inc}, \\
S_{100} \psi_1^h + S_{101} \psi_2^h - \left(D_{000} +\tfrac12 \right)\phi_0^h + S_{000}\psi_0^h &=0 \\
\tilde{\epsilon}_1 \omega \left(NS_{100} +\tfrac12 \right) \psi_1^e +\tilde{\epsilon}_1 \omega NS_{101} \psi_2^e -\beta_1 TS_{100} \psi_1^h -\beta_1 TS_{101} \psi_2^h - \tilde \epsilon_0 \omega ND_{000} \phi_0^e \\+\tilde \epsilon_0 \omega \left(NS_{000} -\tfrac12 \right) \psi_0^e +\beta_0 (TD_{000} +\tfrac{\partial_\tau}{2}) \phi_0^h - \beta_0 TS_{000} \psi_0^h &=\tilde \epsilon_0 \omega \partial_n e^{inc}, \\
\tilde{\mu}_1 \omega NS_{110}  \psi_1^h +\tilde{\mu}_1 \omega \left(NS_{111}-\tfrac12 \right) \psi_2^h +\beta_1 TS_{110} \psi_1^e +\beta_1 TS_{111} \psi_2^e +i \lambda S_{110} \psi_1^h + i \lambda S_{111} \psi_2^h &=0, \\
\lambda \tilde{\epsilon}_1 \omega NS_{110}  \psi_1^e +\lambda\tilde{\epsilon}_1 \omega \left(NS_{111}-\tfrac12 \right) \psi_2^e -\lambda \beta_1 TS_{110} \psi_1^h -\lambda\beta_1 TS_{111} \psi_2^h + i S_{110} \psi_1^e + i  S_{111} \psi_2^e &=0 .
\end{aligned}
\end{equation}

To simplify the above system, we set
\begin{equation}\label{assume}
\tilde \epsilon_1 \psi_1^e = - \tilde \epsilon_0 \psi_0^e , \quad \mbox{and} \quad \tilde \mu_1 \psi_1^h = - \tilde \mu_0 \psi_0^h ,
\end{equation}
and the system \eqref{system1} admits the matrix form
\begin{equation}\label{system2}
\left(\b B + \b C \right) \bm\phi = \b f,
\end{equation}
where $\bm\phi = ( \phi_0^e , \, \psi_1^h, \, \phi_0^h , \, \psi_1^e, \, \psi_2^h, \, \psi_2^e )^\top \in \C^6, \quad \b f = \left( e^{inc}, \,\beta_0 \partial_\tau e^{inc}, \, 0, \, \tilde \epsilon_0 \omega \partial_n e^{inc}, \, 0, \, 0\right)^\top \in \C^6,$ 
and
\begin{equation*}
\b B = \begin{pmatrix}
  -\dfrac12 & 0 & 0 & 0 & 0 & 0\\[10pt]
  -\dfrac{\beta_0}{2}\partial_\tau & \tilde\mu_1 \omega & 0 & 0 & 0 & 0\\
  0 & 0 & -\dfrac12 & 0 & 0 & 0 \\[10pt]
  0 & 0 & \dfrac{\beta_0}{2}\partial_\tau & \tilde\epsilon_1 \omega & 0 & 0 \\
  0 & 0 & 0 & 0 & -\dfrac{\tilde \mu_1 \omega}{2} & 0\\
  0 & 0 & 0 & 0 & 0 & -\dfrac{\lambda\tilde \epsilon_1 \omega}{2}   
\end{pmatrix} .
\end{equation*}
The operator $\b C = (C_{kj})_{1\leq k,j \leq 6}$ has entries:
\begin{align*}
C_{11} &= -D_{000}  , &
C_{14} &= S_{100} - \dfrac{\tilde{\epsilon}_1}{\tilde{\epsilon}_0} S_{000}, & 
C_{16} &= S_{101}, \\
C_{21} &= -\beta_0 TD_{000}, & C_{22} &=  \tilde{\mu}_1 \omega (NS_{100}-NS_{000}), & C_{23} &= -\tilde{\mu}_0 \omega ND_{000}, \\
C_{24} &= \beta_1 TS_{100} - \beta_0\dfrac{ \tilde{\epsilon}_1}{\tilde{\epsilon}_0} TS_{000}, & C_{25} &= \tilde{\mu}_1 \omega NS_{101}, & C_{26} &= \beta_1 TS_{101}, \\
C_{32} &= S_{100} - \dfrac{\tilde{\mu}_1}{\tilde{\mu}_0} S_{000}, & C_{33} &= C_{11}, & C_{35} &= C_{16}, \\
C_{41} &= -\tilde{\epsilon}_0 \omega ND_{000}, & C_{42} &= -\beta_1 TS_{100} + \beta_0\dfrac{ \tilde{\mu}_1}{\tilde{\mu}_0} TS_{000}, & C_{43} &= -C_{21}, \\ 
C_{44} &= \tilde{\epsilon}_1 \omega (NS_{100}-NS_{000}), & C_{45} &= -C_{26}, & C_{46} &= \tilde{\epsilon}_1 \omega NS_{101}, \\ 
C_{52} &= \tilde{\mu}_1 \omega NS_{110} +i \lambda S_{110}, & C_{54} &= \beta_1 TS_{110}, & C_{55} &= \tilde{\mu}_1 \omega NS_{111} +i \lambda S_{111}, \\ 
C_{56} &= \beta_1 TS_{111}, & C_{62} &= -\lambda C_{54}, & C_{64} &= \lambda\tilde{\epsilon}_1 \omega NS_{110} +i S_{110}, \\ 
C_{65} &= -\lambda C_{56}, & C_{66} &= \lambda\tilde{\epsilon}_1 \omega NS_{111} +i  S_{111},  \\ 
\end{align*}
and the rest are zero. The special form of $\b B$ and the boundedness of the tangential operator $\partial_\tau : H^{1/2} (\Gamma_0) \rightarrow H^{-1/2} (\Gamma_0)$ allow us to construct its bounded inverse, given by
\begin{equation*}
\b B^{-1} = \begin{pmatrix}
  -2 & 0 & 0 & 0 & 0 & 0\\[10pt]
  -\dfrac{\beta_0}{\tilde{\mu}_1 \omega}\partial_\tau & \dfrac1{\tilde\mu_1 \omega} & 0 & 0 & 0 & 0\\
  0 & 0 & -2 & 0 & 0 & 0 \\[10pt]
  0 & 0 & \dfrac{\beta_0}{\tilde{\epsilon}_1 \omega}\partial_\tau & \dfrac1{\tilde\epsilon_1 \omega} & 0 & 0 \\
  0 & 0 & 0 & 0 & -\dfrac{2}{\tilde \mu_1 \omega} & 0\\
  0 & 0 & 0 & 0 & 0 & -\dfrac{2}{\lambda\tilde \epsilon_1 \omega}   
\end{pmatrix} .
\end{equation*}
Then, we rewrite \eqref{system2} as
\begin{equation}\label{system3}
\left(\b I + \b K \right) \bm\phi = \b g,
\end{equation}
where $\b I$ is the identity operator, $\b g = \b B^{-1} \b f = \left( -2 e^{inc}, \,0, \, 0, \, \dfrac{\tilde \epsilon_0}{\tilde \epsilon_1} \partial_n e^{inc}, \, 0, \, 0\right)^\top ,$ and the matrix $\b K = \b B^{-1} \b C$ has now entries:
\begin{equation*}
K_{1j} = -2 C_{1j}, \quad K_{3j} = -2 C_{3j}, \quad K_{5j} = -\dfrac2{\tilde{\mu}_1 \omega} C_{5j}, \quad K_{6j} = -\dfrac{2}{\lambda\tilde \epsilon_1 \omega} C_{6j}, \quad \mbox{for } j=1,...,6,
\end{equation*}
and
\begin{equation*}
\begin{aligned}
K_{21} &= 0, & K_{22} &= NS_{100}-NS_{000}, & K_{23} &= -\dfrac{\tilde{\mu}_0}{\tilde{\mu}_1} ND_{000}, & K_{24} &= \dfrac{\beta_1 - \beta_0}{\tilde{\mu}_1 \omega} TS_{100}, \\
K_{25} &= NS_{101}, & K_{26} &= \dfrac{\beta_1 - \beta_0}{\tilde{\mu}_1 \omega} TS_{101}, & K_{41} &= -\dfrac{\tilde{\epsilon}_0}{\tilde{\epsilon}_1} ND_{000}, & K_{42} &= \dfrac{\beta_0 - \beta_1}{\tilde{\epsilon}_1 \omega} TS_{100}, \\
 K_{43} &=0, & K_{44} &= K_{22}, & K_{45} &= \dfrac{\beta_0 - \beta_1}{\tilde{\epsilon}_1 \omega} TS_{101}, & K_{46} &= K_{25} .
\end{aligned}
\end{equation*}

We define the product spaces:
\begin{align*}
H_1 &:= \left( H^{1/2} (\Gamma_0) \times H^{-1/2} (\Gamma_0)\right)^2 \times \left(  H^{-1/2} (\Gamma_1)\right)^2 , \\
H_2 &:= \left( H^{-1/2} (\Gamma_0) \times H^{-3/2} (\Gamma_0)\right)^2 \times \left(  H^{-3/2} (\Gamma_1)\right)^2 ,
\end{align*}
and using the mapping properties of the integral operators \cite{ColKre13b, Kre99}, we see that the operator $\b K : H_1 \rightarrow H_2$ is compact. The last step is to prove uniqueness of solutions. Then, solvability of the system \eqref{system3} follows from the Fredholm alternative theorem. It is sufficient to show that the operator $\b I +\b K$ is injective. 

Let $\bm\phi$ solve $\left(\b I + \b K \right) \bm\phi = \b 0.$ Then, the fields \eqref{fields} solve the problem \eqref{helm} -- \eqref{radiation} with  $e^{inc} = \partial_n  e^{inc}= \partial_\tau e^{inc} = 0,$ on $\Gamma_0.$ Hence, by \autoref{theorem1} we have $e^1 = h^1 =0,$ in $\Omega_1$ and $e^0 = h^0 =0,$ in $\Omega_0.$ Continuity  of the single-layer potential implies that $e^1$ and $h^1$ solve also 
\[
\Delta e^1 + \kappa^2_1 \,e^1 = 0, \quad
\Delta h^1 + \kappa^2_1 \,h^1 = 0, \quad \mbox{in  } \Omega_h ,
\]
and vanish on $\Gamma_1.$ Thus, $e^1 = h^1 =0,$ in $\Omega_h,$ if $\kappa_1^2$ is not a Dirichlet eigenvalue in $\Omega_h .$ The jump relation of the normal derivative of the single layer across the boundary $\Gamma_1,$ gives $\psi_2^e = \psi_2^h =0,$ on $\Gamma_1 .$

We define 
\[
\tilde e (\b x) = (\mathcal S_{100} \, \psi_1^e ) (\b x), \quad \tilde h (\b x) = (\mathcal S_{100} \, \psi_1^h ) (\b x), \quad \b x \in \Omega_0 .
\]
Again using the continuity of the single-layer potential we get $\tilde{e} = S_{100} \psi_1^e = e^1 = 0,$ and $\tilde{h} = S_{100} \psi_1^h = h^1 = 0,$ on $\Gamma_0.$ Since $\tilde{e}$ and $\tilde{h}$ are also radiating solutions of the Helmholtz equation in $\Omega_0,$ we get $\tilde{e} = \tilde{h} = 0,$ in $\Omega_0$ \cite{ColKre13b}. Again the jump relation of the normal derivative of the single layer potential across the boundary $\Gamma_0$, results to $\psi_1^e = \psi_1^h = 0,$ on $\Gamma_0.$ From \eqref{assume} we have also $\psi_0^e = \psi_0^h = 0,$ on $\Gamma_0.$ 

Given now the representations  \eqref{fields}, the homogeneous transmission conditions \eqref{homo1} and \eqref{homo3} and the jump relations of the double-layer potential we obtain the equations
\[
\left( D_{000} - \frac12\right) \phi_0^e =0, \quad \left( D_{000} - \frac12\right) \phi_0^h =0, \quad \mbox{on } \Gamma_0 .
\]
This integral operator is injective if $\kappa_0^2$ is not a Dirichlet eigenvalue in $\R^2 \setminus \overline{\Omega}_0.$ Thus, $\phi_0^e =\phi_0^h =0,$ on $\Gamma_0,$ and therefore $\bm\phi = 0$ which completes the proof.
\end{proof}

\section{Numerical examples}\label{sec_numerics}

We derive the numerical solution of \eqref{system3} by a collocation method using trigonometric polynomial approximations \cite{Kre99}. We use quadrature rules to handle the singularities (weak and strong) of the integral operators and the trapezoidal rule for approximating the smooth kernels \cite{Kre14}. For the convergence and error analysis, see \cite{Kre95}.
We do not present here the parametrized forms of the integral operators  \eqref{operators} since they can be found in many works, we refer the reader to the books \cite{ColKre13b, Kre99} and to \cite[Section 4]{GinMin16} for a complete list of all forms and special decompositions.

We present two different kind of problems. In the first case, we formulate a problem, as in \cite{GinMin16, WanNak12}, whose solutions (the scattered electric and magnetic fields) can be analytically calculated. In this way, we can see that the exponential convergence is achieved. Secondly, we present results for the scattering problem of obliquely incident waves. 

We assume that the smooth boundaries admit the following parametrization
\[
\Gamma_j = \{ \b x^j (t) = (x^j_1 (t), x^j_2 (t)) : t \in [0,2\pi]\}, \quad j=0,1,
\]
where $\b  x^j : \R \rightarrow \R^2$ are $C^2$-smooth, $2\pi$-periodic and injective in $[0,2\pi).$ For the numerical implementation we consider equidistant grid points
\[
t_k = \frac{k\pi}{n}, \quad k=0,...,2n-1.
\]

\begin{figure}[t]
\begin{center}
\includegraphics[scale=0.85]{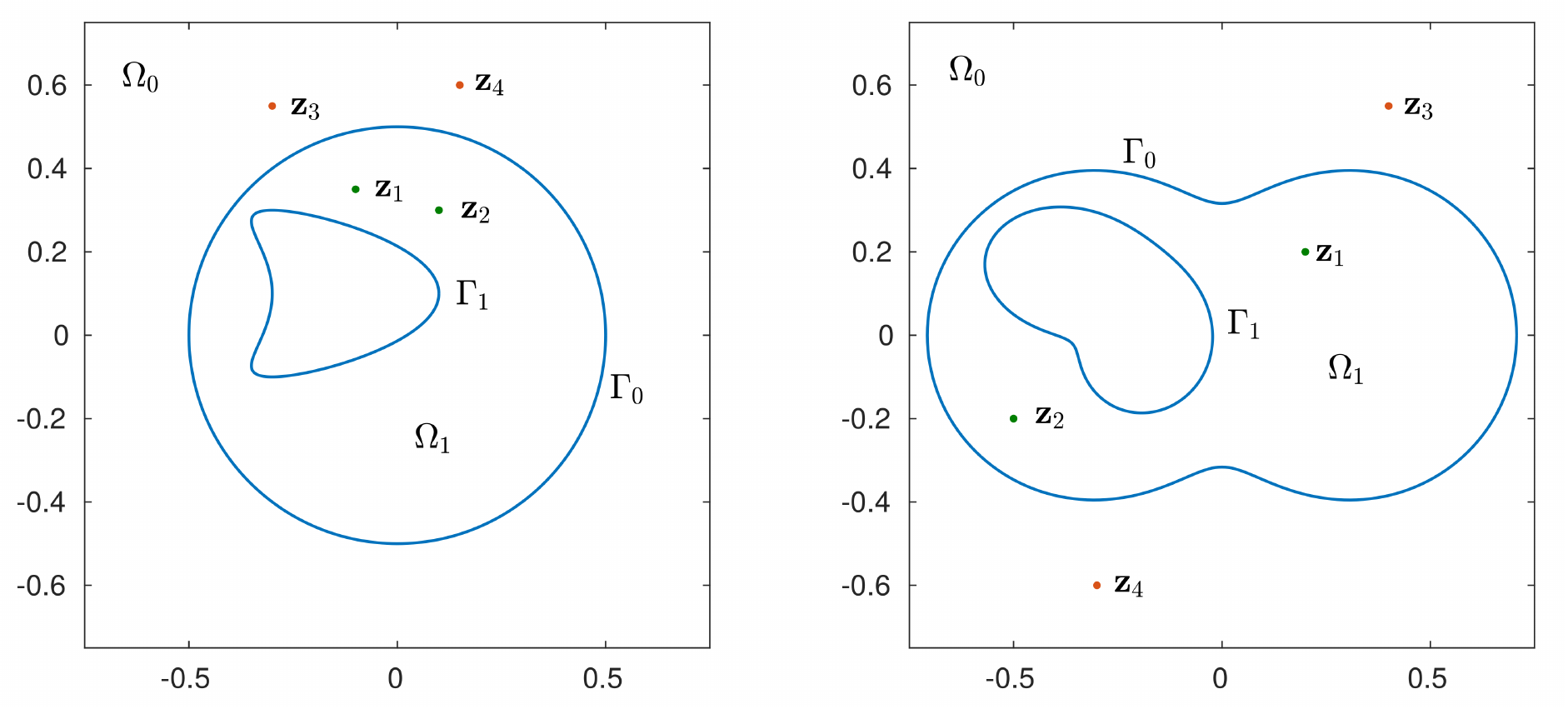}
\caption{The parametrization of the boundary $\Gamma = \Gamma_1 \cup \Gamma_0$ and the source points in the first (left) and in the second example (right).}\label{Fig2}
\end{center}
\end{figure}

\subsection{Examples with analytic solution} 

We construct a problem whose solutions are expressed analytically. Let $\b z_1 , \, \b z_2 \in \Omega_1$ and $\b z_3 , \, \b z_4 \in \Omega_0$ be four arbitrary points. We define the boundary functions
\begin{align*}
f_1 (\b x) &= H_0^{(1)} (\kappa_1 |\b r_3 (\b x) |) - H_0^{(1)} (\kappa_0 |\b r_1 (\b x) |) , & \b x \in \Gamma_0 ,\\
f_2 (\b x)&= -\tilde\mu_1 \omega \kappa_1 H_1^{(1)} (\kappa_1 |\b r_4 (\b x) |) \frac{\n (\b x) \cdot \b r_4 (\b x)}{|\b r_4 (\b x) |} 
 - \beta_1 \kappa_1 H_1^{(1)} (\kappa_1 |\b r_3 (\b x) |) \frac{\ta (\b x) \cdot \b r_3 (\b x)}{|\b r_3 (\b x) |}\\
&\phantom{=}+\tilde\mu_0\omega \kappa_0 H_1^{(1)} (\kappa_0 |\b r_2 (\b x) |) \frac{\n (\b x) \cdot \b r_2 (\b x)}{|\b r_2 (\b x) |} +\beta_0 \kappa_0 H_1^{(1)} (\kappa_0 |\b r_1 (\b x) |) \frac{\ta (\b x) \cdot \b r_1 (\b x)}{|\b r_1 (\b x) |} , & \b x \in \Gamma_0 ,\\
f_3 (\b x)&= H_0^{(1)} (\kappa_1 |\b r_4 (\b x) |) -  H_0^{(1)} (\kappa_0 |\b r_2 (\b x) |) ,  & \b x \in \Gamma_0 ,\\
f_4 (\b x) &= -\tilde\epsilon_1 \omega \kappa_1 H_1^{(1)} (\kappa_1 |\b r_3 (\b x) |) \frac{\n (\b x) \cdot \b r_3 (\b x)}{|\b r_3 (\b x) |} + \beta_1 \kappa_1 H_1^{(1)} (\kappa_1 |\b r_4 (\b x) |) \frac{\ta (\b x) \cdot \b r_4 (\b x)}{|\b r_4 (\b x) |}\\
&\phantom{=}+\tilde\epsilon_0\omega \kappa_0 H_1^{(1)} (\kappa_0 |\b r_1 (\b x) |) \frac{\n (\b x) \cdot \b r_1 (\b x)}{|\b r_1 (\b x) |} -\beta_0 \kappa_0 H_1^{(1)} (\kappa_0 |\b r_2 (\b x) |) \frac{\ta (\b x) \cdot \b r_2 (\b x)}{|\b r_2 (\b x) |} ,  & \b x \in \Gamma_0 , \\
f_5 (\b x)&= -\tilde\mu_1 \omega \kappa_1 H_1^{(1)} (\kappa_1 |\b r_4 (\b x) |) \frac{\n (\b x) \cdot \b r_4 (\b x)}{|\b r_4 (\b x) |} 
 - \beta_1 \kappa_1 H_1^{(1)} (\kappa_1 |\b r_3 (\b x) |) \frac{\ta (\b x) \cdot \b r_3 (\b x)}{|\b r_3 (\b x) |}\\
&\phantom{=}+i \lambda  H_0^{(1)} (\kappa_1 |\b r_4 (\b x) |), & \b x \in \Gamma_1 ,\\
f_6 (\b x) &= -\lambda\tilde\epsilon_1 \omega \kappa_1 H_1^{(1)} (\kappa_1 |\b r_3 (\b x) |) \frac{\n (\b x) \cdot \b r_3 (\b x)}{|\b r_3 (\b x) |} + \lambda \beta_1 \kappa_1 H_1^{(1)} (\kappa_1 |\b r_4 (\b x) |) \frac{\ta (\b x) \cdot \b r_4 (\b x)}{|\b r_4 (\b x) |}\\
&\phantom{=}+i   H_0^{(1)} (\kappa_1 |\b r_3 (\b x) |), & \b x \in \Gamma_1 ,\\
\end{align*}
where $\b r_k (\b x) = \b x - \b z_k , k=1,2,3,4.$ Then, the fields
\begin{equation}\label{eq_examaple}
\begin{aligned}
e^0 (\b x) &= H_0^{(1)} (\kappa_0 |\b x- \b z_1 |), &  h^0 (\b x) &= H_0^{(1)} (\kappa_0 |\b x- \b z_2 |), \quad \b  x \in \Omega_0, \\
e^1 (\b x) &= H_0^{(1)} (\kappa_1 |\b x- \b z_3 |), &  h^1 (\b x) &= H_0^{(1)} (\kappa_1 |\b x- \b z_4 |), \quad \b  x \in \Omega_1 ,
\end{aligned}
\end{equation}
solve the equations
\begin{equation*}
\begin{aligned}
\Delta e^0 + \kappa^2_0 \,e^0 &= 0,  &
\Delta h^0 + \kappa^2_0 \,h^0 &= 0,  & \mbox{in  } \Omega_0 , \\
\Delta e^1 + \kappa^2_1 \,e^1 &= 0, &
\Delta h^1 + \kappa^2_1 \,h^1 &= 0, & \mbox{in  } \Omega_1 ,
\end{aligned}
\end{equation*}
with boundary conditions
\begin{equation*}
\begin{aligned}
e^1 - e^0 &= f_1 , & \mbox{on   } \Gamma_0 , \\
\tilde\mu_1 \omega \frac{\partial h^1}{\partial n }  + \beta_1 \frac{\partial e^1}{\partial \tau } - \tilde\mu_0 \omega \frac{\partial h^0}{\partial n }  - \beta_0 \frac{\partial e^0}{\partial \tau } &= f_2, & \mbox{on   } \Gamma_0 , \\
h^1 - h^0 &= f_3 , & \mbox{on   } \Gamma_0 , \\
\tilde\epsilon_1 \omega \frac{\partial e^1}{\partial n }  - \beta_1 \frac{\partial h^1}{\partial \tau } - \tilde\epsilon_0 \omega \frac{\partial e^0}{\partial n }  + \beta_0 \frac{\partial h^0}{\partial \tau } &= f_4, & \mbox{on   } \Gamma_0 , \\
\tilde\mu_1 \omega   \frac{\partial h^1}{\partial n } + \beta_1 \frac{\partial e^1}{\partial \tau } +\lambda i h^1 &= f_5 , & \quad \mbox{on   } \Gamma_1 , \\
\lambda \tilde\epsilon_1 \omega \frac{\partial e^1}{\partial n }  - \lambda\beta_1 \frac{\partial h^1}{\partial \tau } + i e^1 &= f_6, & \quad \mbox{on   } \Gamma_1 .
\end{aligned}
\end{equation*}
and $e^0 , \, h^0$ satisfy in addition the radiation conditions \eqref{radiation}.

 \begin{table}[t]
\begin{center}
 \begin{tabular}{| c  | c  | c  | } 
 \hline
 $n$ & $ e_n^\infty (\b{\hat{x}}(0)) $ & $ h_n^\infty (\b{\hat{x}}(0)) $  
\\ \hline 
8 & $ 0.550084263052      - i \, 0.665877312380   $ & $  0.646238687778     - i \, 0.549065505420  $  \\ 
16 & $ 0.550961953612      - i \, 0.656336124631  $ & $  0.656591070845     - i \, 0.551308759662   $  \\
32 & $ 0.551551006183      - i \, 0.656427458763  $ & $ 0.656427073431     - i \, 0.551550848571  $ \\
64 & $ 0.551550951843      - i \, 0.656427255249   $ & $ 0.656427255242     -i \, 0.551550951840 $ \\
\cline{1-1}\hhline{~==}
\multicolumn{1}{c|}{} &  $ e^\infty (\b{\hat{x}}(0)) $   & $h^\infty (\b{\hat{x}}(0))$ \\
\cline{2-3} 
\multicolumn{1}{c|}{} &  $ 0.551550951838      - i \, 0.656427255240  $  & $  0.656427255240     - i \, 0.551550951838   $  \\ \cline{2-3}
\end{tabular}
\caption{The computed and the exact far-fields of the electric and magnetic scattered fields of the first example. }\label{table1}
\end{center}
\end{table}

As in the proof of \autoref{theorem2} we derive a system of the form \eqref{system3}, where now the right-hand side is given by
\[
\b g_f = \left(
-2f_1 ,\,
-\dfrac{\beta_0}{\tilde{\mu}_1 \omega} \partial_\tau f_1 + \dfrac{1}{\tilde{\mu}_1 \omega} f_2 ,\,
-2 f_3 ,\,
\dfrac{\beta_0}{\tilde{\epsilon}_1 \omega} \partial_\tau f_3 + \dfrac{1}{\tilde{\epsilon}_1 \omega} f_4 ,\,
-\dfrac{2}{\tilde{\mu}_1 \omega} f_5 ,\,
-\dfrac{2}{\lambda \tilde{\epsilon}_1 \omega} f_6
\right)^\top.
\]

Using the asymptotic behavior of the Hankel functions \cite{ColKre13b}, we can construct the exact and the computed far-field pattern of the scattered fields. We see from \eqref{eq_examaple} that the exact values of the far-field patterns of $e^0$ and $h^0$ are given by
\begin{equation}\label{far_exact}
e^\infty (\b{\hat{x}}) = \frac{-4 i \e^{i\pi /4}}{\sqrt{8\pi \kappa_0}} \e^{-i\kappa_0 \b{\hat{x}} \cdot \b z_1}, \quad
h^\infty (\b{\hat{x}}) = \frac{-4 i \e^{i\pi /4}}{\sqrt{8\pi \kappa_0}} e^{-i\kappa_0 \b{\hat{x}} \cdot \b z_2}, \quad \b{\hat{x}} \in S,
\end{equation}
where $S$ is the unit circle. The representations \eqref{fields}, where now the densities solve \eqref{system3} with $\b g$ replaced by $\b g_f ,$   result in
\begin{equation}\label{far_comp}
\begin{aligned}
e_n^\infty (\b{\hat{x}}(t)) &= \frac{\e^{i\pi /4}}{\sqrt{8\pi \kappa_0}} \int_0^{2\pi} \e^{-i\kappa_0 \b{\hat{x}} \cdot \b x^0 (t)} \left[ -i\kappa_0  (\b{\hat{x}} \cdot \n (\b x^{0} (t))) \phi^e_0 (t)   - \psi_0^e (t) \right] |\b x^{0'} (t)| dt, \\
h_n^\infty (\b{\hat{x}}(t)) &= \frac{\e^{i\pi /4}}{\sqrt{8\pi \kappa_0}} \int_0^{2\pi} \e^{-i\kappa_0 \b{\hat{x}} \cdot \b x^0 (t)} \left[ -i\kappa_0  (\b{\hat{x}} \cdot \n (\b x^{0} (t))) \phi^h_0 (t) - \psi_0^h (t) \right] |\b x^{0'} (t)| dt .
\end{aligned}
\end{equation}

In the first example, we set the outer boundary curve $\Gamma_0$ to be a circle with center $(0,0)$ and radius $0.5,$ and the inner $\Gamma_1$ is a kite-shaped boundary  of the form
\[
\Gamma_1 = \left\lbrace \b x^1 (t) = (0.2 \cos t + 0.1 \cos 2t -0.2 , \, 0.2 \sin t + 0.1), \, t\in [0,2\pi]\right\rbrace .
\] 
We consider the points $\b z_1 = (-0.1, \, 0.35), \, \b z_2 = (0.1, \, 0.3) \in \Omega_1,$ and $\b z_3 = (-0.3, \, 0.55), \, \b z_4 = (0.15, \, 0.6) \in \Omega_0,$ see the left picture in \autoref{Fig2}. We set $\omega = 1, \, \lambda = 2$ and $\theta = \pi/3.$ The parameters are given by $(\epsilon_0 , \mu_0)= (1,1)$ and  $(\epsilon_1 , \mu_1)= (3,2).$

In \autoref{table1} we present the computed far-field of the electric and magnetic scattered fields at direction $t=0$ for increasing number of quadrature points $n.$ The exponential convergence is clearly exhibited, as we see also in \autoref{Fig3} where we plot the $L^2$ norm of the difference between the exact and the computed far-fields in logarithmic scale.

\begin{figure}[t]
\begin{center}
\includegraphics[scale=0.8]{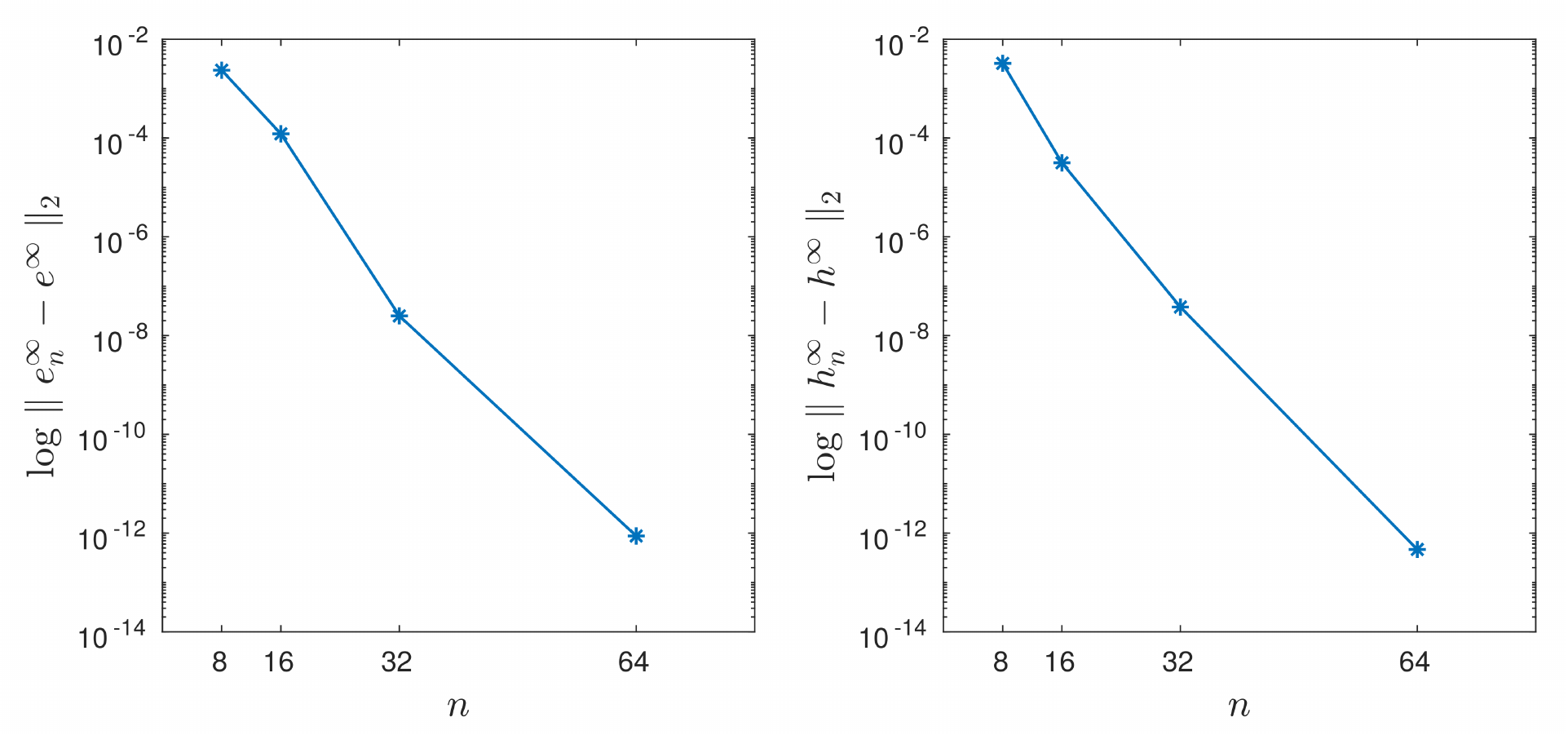}
\caption{The $L^2$ norm (in logarithmic scale) of the difference between the computed and the exact far-field of the electric (left) and the magnetic (right) scattered fields of the first example.  }\label{Fig3}
\end{center}
\end{figure}

In the second example, both boundaries admit the following parametrization
\[
\Gamma_j = \left\lbrace \b x^j (t) = r_j (t) (\cos t , \sin t ) + \b a^j,  \, t\in [0,2\pi]\right\rbrace , \quad j=0,1.
\] 
We consider a peanut-shaped outer boundary with radial function
\[
r_0 (t)= \left(0.5 \cos^2 t+ 0.1 \sin^2 t \right)^{1/2}, \quad \mbox{and} \quad \b a^0 = (0,0),
\]
and an apple-shaped inner boundary curve with
\[
r_1 (t) = \frac{0.45 + 0.3 \cos t -0.1 \sin 2t}{1+0.7 \cos t}, \quad \mbox{and} \quad \b a^1 = ( -0.25, \,  0.05).
\]

The source points are now: $\b z_1 = (0.2, \, 0.2), \, \b z_2 = (-0.5, \, -0.2) \in \Omega_1,$ and $\b z_3 = (0.4, \, 0.55),$ $\b z_4 = (-0.3, \, -0.6) \in \Omega_0,$ see the right picture in \autoref{Fig2}. We set $\omega = 2$ and $\theta = \pi/4,$ and we choose the parameters to be $(\epsilon_0 , \mu_0)= (2,1)$ and  $(\epsilon_1 , \mu_1)= (4,2).$ Here, the impedance function is given by
\[
\lambda (\b x^1 (t)) = \frac{1}{1 + 0.2 \cos t}.
\]

The computed far-field of the electric and magnetic scattered fields for increasing number of quadrature points $n,$ and the exact far-fields at direction $t = \pi/4$  are given in \autoref{table2}. Their $L^2$ norm difference is presented in \autoref{Fig4}. Again the exponential convergence is guaranteed independently of the different parameters. 

 \begin{table}[t]
\begin{center}
 \begin{tabular}{| c  | c  | c  | } 
 \hline
 $n$ & $ e_n^\infty (\b{\hat{x}}(\pi/4)) $ & $ h_n^\infty (\b{\hat{x}}(\pi/4)) $  
\\ \hline 
8 & $ 0.110995105311      - i \, 0.558817682095   $ & $  0.533445748875    + i \, 0.122603921042  $  \\ 
16 & $ 0.123115240894      -i \,0.551055700217   $ & $  0.552938285062     + i \,0.114751031182   $  \\
32 & $ 0.122965211004      -i \,0.550626647288   $ & $ 0.552427942024     + i \,0.114603756425   $ \\
64 & $  0.122964711410      -i \,0.550626521274   $ & $ 0.552427483456     + i \,0.114602625221   $ \\
\cline{1-1}\hhline{~==}
\multicolumn{1}{c|}{} &  $ e^\infty (\b{\hat{x}}(\pi/4)) $   & $h^\infty (\b{\hat{x}}(\pi/4))$ \\
\cline{2-3} 
\multicolumn{1}{c|}{} &  $ 0.122964711410      - i \,0.550626521275     $  & $  0.552427483455  + i \,   0.114602625221   $  \\ \cline{2-3}
\end{tabular}
\caption{The computed and the exact far-fields of the electric and magnetic scattered fields of the second example. }\label{table2}
\end{center}
\end{table}

\begin{figure}[t]
\begin{center}
\includegraphics[scale=0.85]{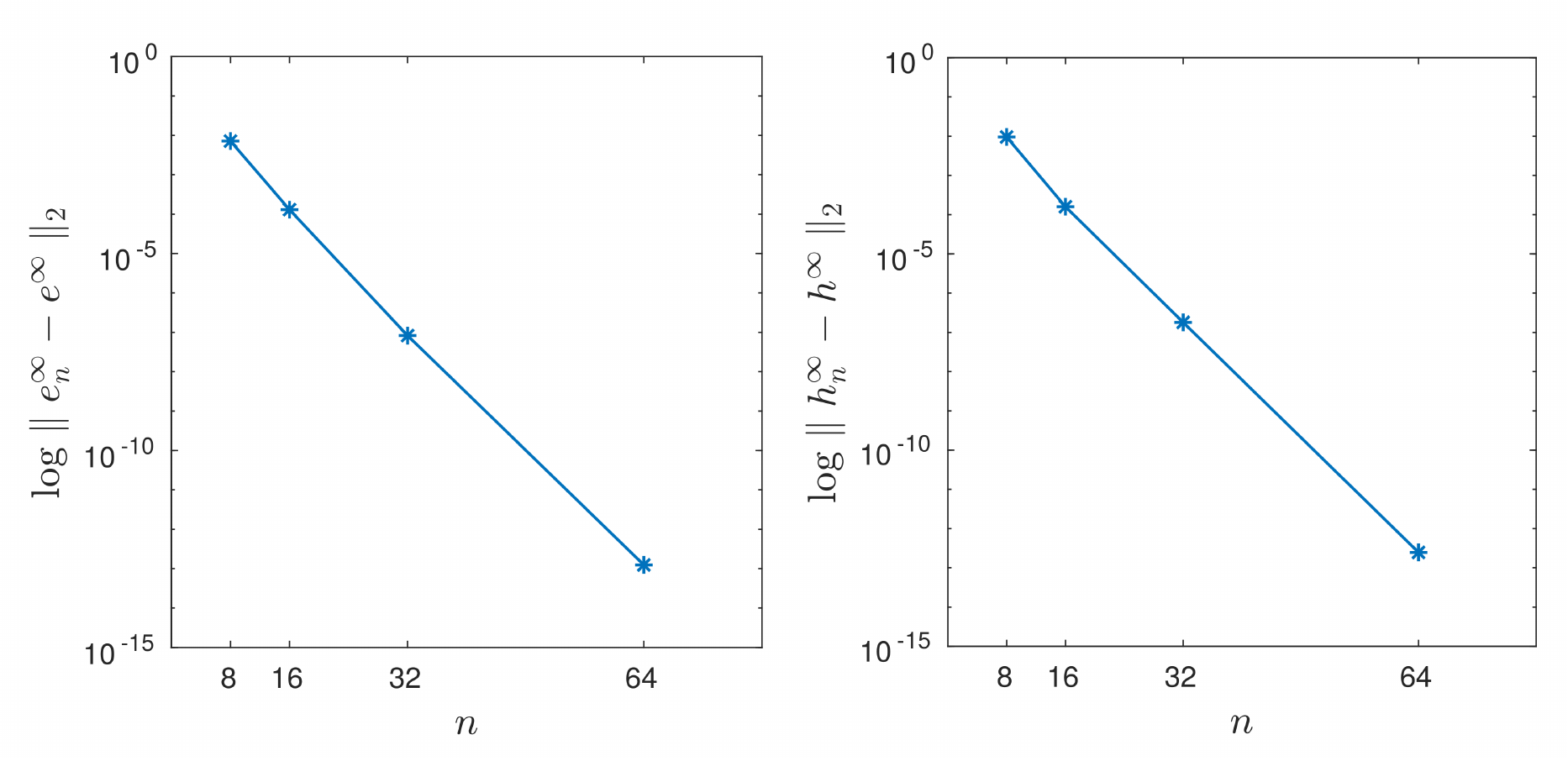}
\caption{The $L^2$ norm (in logarithmic scale) of the difference between the computed and the exact far-field of the electric (left) and the magnetic (right) scattered fields of the second example.  }\label{Fig4}
\end{center}
\end{figure}

\begin{figure}[h]
\begin{center}
\includegraphics[scale=0.75]{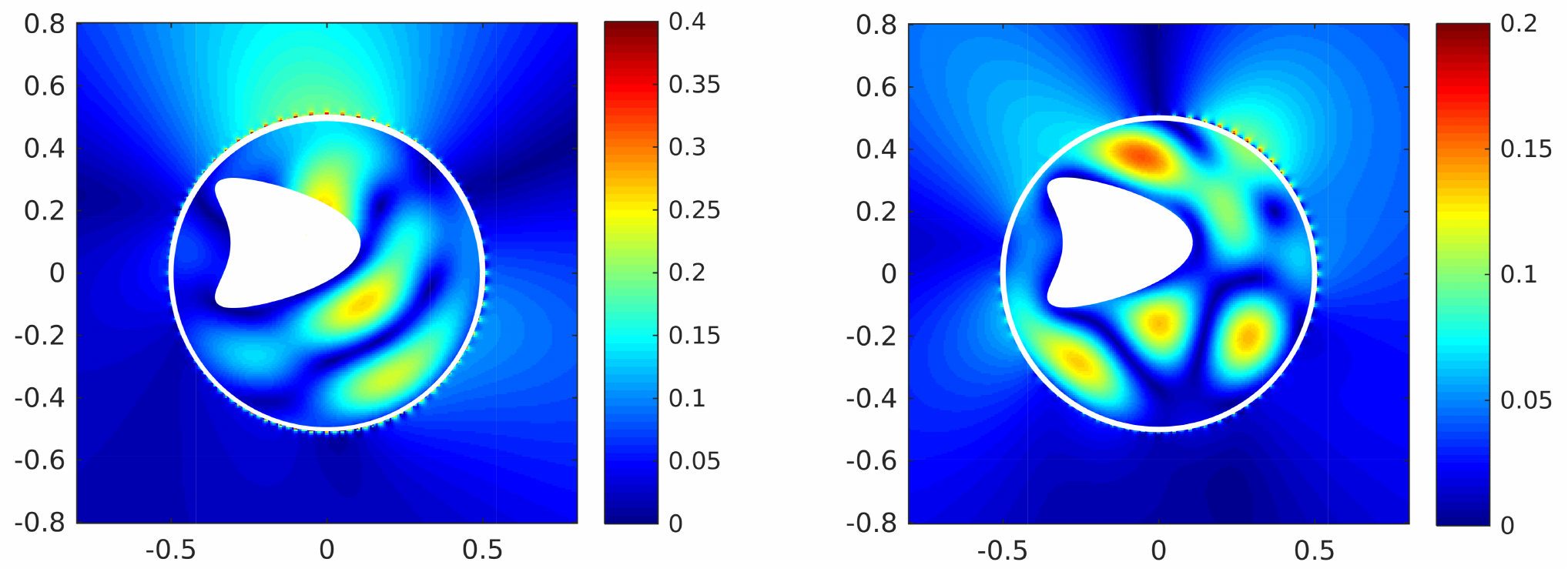}
\caption{The norms of the electric fields $e^0$ and $e^1$ (left) and those of the magnetic fields $h^0$ and $h^1$ (right) for $\phi = \pi/2.$ }\label{Fig5}
\end{center}
\end{figure}

\begin{figure}[ht]
\begin{center}
\includegraphics[scale=0.76]{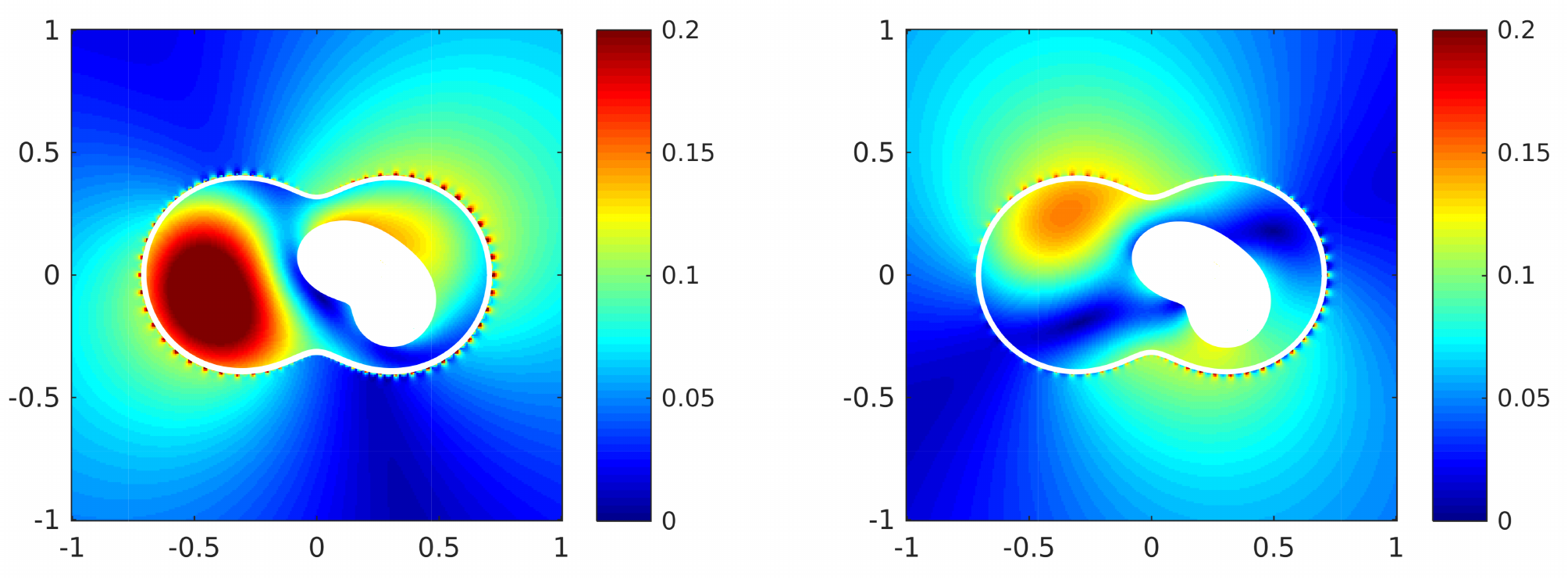}
\caption{The norms of the electric fields $e^0$ and $e^1$ (left) and those of the magnetic fields $h^0$ and $h^1$ (right) for $\phi = \pi/6.$ }\label{Fig6}
\end{center}
\end{figure}

\subsection{Examples with oblique incidence}

We consider obliquely incident waves of the form \eqref{incident} and 
we vary the polar angle $\phi,$ which corresponds to the incident direction $(\cos \phi, \, \sin \phi)$ in two dimensions. We restrict the computation domain to the rectangular domain $[-c,\,c]^2,$ where we consider a uniform-space grid of the form $\b x_{kj} = (-c + k \delta , -c + j\delta),$ with $\delta = 2c/(2m-1),$ for $k,j = 0,...,2m-1 .$ We use $m=128 .$

In the third example we consider the parametrizations of the first example and we set $\omega = 6$ and $\theta = \pi/4,$ while keeping all the other parameters the same. The values of the norms of the interior and exterior fields are given in \autoref{Fig5} for $c=0.8$ and $\phi = \pi/2.$

In the last example we consider the setup of the second example, where now $\b a^1 = (0.25,$ $ -0.05).$ We set $\omega = 1,$ and  $\theta = \phi = \pi/6.$ The parameters are given by $(\epsilon_0 , \mu_0)= (1,1)$ and  $(\epsilon_1 , \mu_1)= (6,4).$ In \autoref{Fig6} we see the results for $c=1.$

\section{Conclusions}

We addressed the direct electromagnetic scattering problem for a infinitely long, penetrable and doubly-connected cylinder. The cylinder was placed in a homogeneous dielectric medium and was illuminated by a time-harmonic electromagnetic wave at oblique incidence. We considered transmission conditions on the outer boundary and impedance boundary condition on the inner boundary. We proved the well-posedness of the problem using Green's formulas and the integral representation of the solutions (hybrid method). We presented numerical results which showed
the feasibility of the proposed method.

\section*{Acknowledgements}

The author would like to thank Drossos Gintides for discussions and his suggestions on this topic.

\end{document}